\documentclass[a4paper, reqno, 11pt]{amsart}
\usepackage{color}
\makeatletter

\@addtoreset{equation}{section}
\makeatother
\usepackage{setspace}
\usepackage{xcolor}
\usepackage{here}
\usepackage{graphicx}
\usepackage{float}
\usepackage[hidelinks]{hyperref}
\usepackage{tikz}
\usepackage[T1]{fontenc}
\usepackage{amsmath}
\usepackage{amssymb}
\usepackage{latexsym}
\usepackage{amsthm}
\usepackage{hyperref}
\usepackage{mathtools}
\usepackage{mathrsfs}
\usepackage{caption}
\everymath{\displaystyle}

\newtheorem{Thm}{Theorem}[section]

\theoremstyle{definition}

\newtheorem{Rem}[Thm]{Remark}
\newtheorem{Exa}[Thm]{Example}

\begin{document}

\begin{abstract}
We study graph-directed conjugate functional equations on the unit interval indexed by the complete digraph with self-loops on two vertices. 
We focus on the singularity and regularity of the solutions for compatible systems of weak contractions. 
First, we show that both solutions are singular in the affine case unless the two systems coincide; second, we obtain a dichotomy between singularity and smoothness for a class of linear fractional systems; and finally, we give a sufficient condition for singularity in a non-linear setting.
\end{abstract}

\title[]{Singularity for graph-directed conjugate equations indexed by a two-vertex digraph}
\author{Kazuki Okamura}
\date{\today}
\address{Department of Mathematics, Faculty of Science, Shizuoka University, 836, Ohya, Suruga-ku, Shizuoka, 422-8529, JAPAN.}
\email{okamura.kazuki@shizuoka.ac.jp}
\keywords{de Rham functional equation, conjugate equation, iterative equation, singular function, graph-directed invariant set}
\subjclass[2020]{39B12, 39B72, 26A30}
\maketitle

\section{Introduction}\label{sec:intro}

The de Rham functional equation belongs to an important class of iterative functional equations and has been studied by many authors (\cite{Barany2018, Berg2000, BuescuSerpa2021, Girgensohn2006, Hata1985K, Hata1985, Kawaharada2025, Kawamura2002, Serpa2015, Serpa2015N, Serpa2017, Zdun2001}).   
It can also be regarded as a special case of conjugate equations \cite{BuescuSerpa2021, Okamura2019}. 
More recently, graph-directed versions of such equations have been considered in \cite{MuramotoSekiguchi2020}, and \cite[Section 5]{Okamura2025} extended the framework on the unit interval to a graph-directed setting.

In this paper, we deal with the simplest nontrivial graph-directed case, specifically, the complete digraph with self-loops on two vertices. 
Let $\{f_{i,j}\}_{i,j \in \{0,1\}}$ and $\{g_{i,j}\}_{i,j \in \{0,1\}}$ be transformations on $[0,1]$ satisfying a certain compatibility condition. 
We consider the pair of solutions $(\varphi_0, \varphi_1)$ of 
\begin{equation}\label{eq:conjugate-1st} 
g_{i,j} \circ \varphi_j = \varphi_{i} \circ f_{i,j}, \ i, j \in \{0,1\}. 
\end{equation}
See Section \ref{sec:frame} for the precise formulation. 
\cite[Section 5]{Okamura2025} establishes the existence and uniqueness of the pair of increasing and continuous solutions $(\varphi_0, \varphi_1)$. 

In this paper, we consider the singularity and regularity of  $(\varphi_0, \varphi_1)$. 
It is a central theme in this area whether solutions are singular or smooth. 
Singularity for de Rham-type functional equations and, more generally, de Rham curves has been investigated. 
However, we are not aware of previous singularity results for graph-directed conjugate equations in the present framework. 

The main results of this paper are as follows.
First, we prove that in the affine case both solutions $\varphi_0$ and $\varphi_1$ are singular unless the two affine systems $\{f_{i,j}\}_{i,j}$ and $\{g_{i,j}\}_{i,j}$ coincide, in which case both solutions are the identity. 
Second,  
we obtain a dichotomy between singularity and smoothness for a family with dyadic source maps $\{f_{i,j}\}_{i,j}$ and linear fractional target maps $\{g_{i,j}\}_{i,j}$, and give explicit expressions for the solutions in the regular case. 
Third, we give a sufficient condition for singularity for a family with dyadic source maps $\{f_{i,j}\}_{i,j}$ and non-linear target maps $\{g_{i,j}\}_{i,j}$ in terms of the product of the Lipschitz constants.

This paper is organized as follows. 
In Section  \ref{sec:frame}, we recall the framework from \cite[Section 5]{Okamura2025}. 
Section \ref{sec:affine} considers the affine case. 
Section \ref{sec:LF} studies the linear fractional case. 
Section  \ref{sec:nl} gives a singularity criterion in a non-linear setting.
Finally, in Section \ref{sec:exa}, we give some examples.

\section{Framework}\label{sec:frame}

We follow the setting of \cite[Section 5]{Okamura2025}. 
However, we use $\{0,1\}$ instead of $\{1,2\}$ as the index set for the maps. 

We adopt the Matkowski--Rus definition of weak contractions \cite{Matkowski1975,Rus2001}.  
We say that a map $\phi : [0,\infty) \to [0,\infty)$ is a {\it comparison function} if 
$\phi$ is increasing and 
$\displaystyle \lim_{n \to \infty} \phi^n (t) = 0$ for every $t > 0$. 
Let $\phi$ be a comparison function. 
We say that a map $h : [0,1] \to [0,1]$ is a {\it $\phi$-contraction} if 
$|h(x) - h(y)| \le \phi(|x-y|)$ for every $x, y \in [0,1]$. 
Since $\phi(t) \le t$ for every $t \ge 0$, every $\phi$-contraction is continuous and its Lipschitz norm is less than or equal to $1$. 

We say that a doubly-indexed family of functions  $h_{i,j}, i, j \in \{0,1\}$, on $[0,1]$ is a {\it compatible system} with a comparison function $\phi$ 
if each $h_{i,j}$ is strictly increasing, and a $\phi$-contraction, and furthermore a compatibility condition holds, specifically, 
\begin{equation}\label{eq:compatibility-def} 
0 = h_{i,0} (0) < h_{i,0}(1) = h_{i,1}(0) < h_{i,1}(1) = 1, \ i \in \{0,1\}. 
\end{equation} 

Let 
\[ D^h_{i, n}  \coloneqq \left\{  h_{i,i_1} \circ h_{i_1,i_2} \circ \cdots \circ h_{i_{n-1},i_n} (x_n) \middle| i_1, \dots, i_n \in \{0,1\}, x_n \in \{0,1\}  \right\} \]
and $\displaystyle D^h_{i} \coloneqq \bigcup_{n \ge 1} D^h_{i, n}$. 
Then $(D^h_{i, n})_n$ is increasing with respect to $n$. 

Let 
\[ I^h_{i} (i_1, \dots, i_n)  \coloneqq \left[ h_{i,i_1} \circ h_{i_1,i_2} \circ \cdots \circ h_{i_{n-1},i_n} (0), h_{i,i_1} \circ h_{i_1,i_2} \circ \cdots \circ h_{i_{n-1},i_n} (1) \right]. \]
By the compatibility condition \eqref{eq:compatibility-def}, 
\begin{equation}\label{eq:interval-split} 
I^h_{i} (i_1, \dots, i_n) = I^h_{i} (i_1, \dots, i_n,0) \cup I^h_{i} (i_1, \dots, i_n,1). 
\end{equation}

For every $x \in [0,1]$, 
there exists a sequence $(i_n)_n$ in $\{0,1\}$ such that 
$x \in  I^h_{i} (i_1, \dots, i_n)$ for each $n$, and, if $x \in [0,1] \setminus  D^h_{i}$, then such a sequence is unique. 
We can choose $i_n$ inductively. 
Indeed, by \eqref{eq:interval-split}, 
if $x \in  I^h_{i} (i_1, \dots, i_n)$, then $x \in  I^h_{i} (i_1, \dots, i_n, 0)$ or $x \in  I^h_{i} (i_1, \dots, i_n, 1)$.
If additionally $x \in [0,1] \setminus  D^h_{i}$, then exactly one of $x \in  I^h_{i} (i_1, \dots, i_n, 0)$ and $x \in  I^h_{i} (i_1, \dots, i_n, 1)$ holds.

We denote the length of an interval $I$ by $|I|$. 
Let 
\[ \Delta^{h}_{i, n} \coloneqq \max_{i_1, \dots, i_n \in \{0,1\}}  \left| I^h_{i} (i_1, \dots, i_n) \right|, \ n \ge 1, \  i \in \{0,1\}.  \]
Since $\left| I^h_{i} (i_1, \dots, i_n) \right| \le \phi^n (1)$ and $\phi$ is a comparison function, 
\begin{equation}\label{eq:interval-shrink} 
\lim_{n \to \infty} \Delta^{h}_{i, n} = 0. 
\end{equation}
Hence, $D^h_{i}$ is dense in $[0,1]$. 

Let $\{f_{i,j}\}_{i,j}$ and $\{g_{i,j}\}_{i,j}$ be two compatible systems on $[0,1]$ with a common comparison function $\phi$ under the Euclidean distance. 
For each $i \in \{0,1\}$, 
we define functions $\varphi_i$, on $D^f_{i}$ such that for $i_1, \dots, i_n \in \{0,1\}, x_n \in \{0,1\}$, 
\[ \varphi_i \left( f_{i, i_1} \circ f_{i_1, i_2} \circ \cdots \circ f_{i_{n-1}, i_n} (x_n) \right) = g_{i, i_1} \circ g_{i_1, i_2} \circ \cdots \circ g_{i_{n-1}, i_n} (x_n). \]
This is well-defined by arguments in \cite[Section 5]{Okamura2025}. 

Now we define $\varphi_i (x)$ for $x \in [0,1] \setminus  D^{f}_{i}$. 
Take a unique sequence $(i_n)_n$ in $\{0,1\}$ such that $x \in I^f_{i} (i_1, \dots, i_n)$ for every $n \ge 1$. 
Then, 
$\displaystyle \bigcap_{n \ge 1} I^g_{i} (i_1, \dots, i_n)$ is a compact subset of $[0,1]$ and 
by \eqref{eq:interval-shrink}, 
it is a singleton.
Let $y \in [0,1]$ such that $\displaystyle \{y\} = \bigcap_{n \ge 1} I^g_{i} (i_1, \dots, i_n)$. 
Then we define $\varphi_i (x) \coloneqq y$. 
Thus a map $\varphi_i : [0,1] \to [0,1]$ is defined. 

By \cite[Proposition 5.1]{Okamura2025}, 
each $\varphi_i$ is increasing and continuous on $[0,1]$.
The pair of maps $(\varphi_0, \varphi_1)$ satisfies the following system of functional equations: 
\begin{equation}\label{eq:gd-dR-fe-def-alt} 
\begin{cases} g_{0,0}(\varphi_{0}(x)) = \varphi_0 (f_{0,0}(x))  \\ g_{0,1}(\varphi_{1}(x)) = \varphi_0 (f_{0,1}(x))  \\ g_{1,0}(\varphi_{0}(x)) = \varphi_1 (f_{1,0}(x))  \\ g_{1,1}(\varphi_{1}(x)) = \varphi_1 (f_{1,1}(x))  \end{cases}   x \in [0,1]. 
\end{equation}

The main purpose of this paper is to consider the regularity and singularity of the solutions $\varphi_0$ and $\varphi_1$ in \eqref{eq:gd-dR-fe-def-alt}. 

We can also define a compatible system for a single-indexed family in the same manner; see \cite[Subsection 2.1]{Okamura2024}. 
Assume that $\{f_0, f_1\}$ and $\{g_0,g_1\}$ are both compatible systems and $f_{i,j} = f_j$ and $g_{i,j} = g_j$ for every $i, j \in \{0,1\}$. 
Then, $\varphi_0 = \varphi_1$ on $[0,1]$ and 
\eqref{eq:gd-dR-fe-def-alt} is equivalent to 
\begin{equation}\label{eq:ordinal-dR} 
\begin{cases} g_{0}(\varphi(x)) = \varphi (f_{0}(x))  \\  
g_{1}(\varphi(x)) = \varphi(f_{1}(x))  \end{cases}   
x \in [0,1]. 
\end{equation}
This is the conjugate equation studied in \cite{Okamura2019}, which contains the framework of de Rham functional equations on $[0,1]$. 

We denote the Lebesgue measure by $\ell$. 
We say that a function $\varphi$ on $[0,1]$ is singular if $\varphi^{\prime}(x) = 0$, $\ell$-a.e. \, $x \in (0,1)$.  
We recall the monotone differentiation theorem, which states that every increasing function on $[0,1]$ is Lebesgue measurable and differentiable almost everywhere with respect to $\ell$. 
See \cite[Theorem 1.6.25]{Tao2011}  for more details.

Finally, we state a connection with graph-directed invariant sets. 
Graph-directed invariant sets were first introduced by Mauldin and Williams \cite{MauldinWilliams1988} as a generalization of Hutchinson's self-similar sets. 

Let $V \coloneqq \{0,1\}$. 
For $i, j \in V$, 
let $e_{i,j}$ be a directed edge from $i$ to $j$. 
We remark that $i=j$ is allowed and there are exactly four directed edges. 
See Figure \ref{fig:digraph} below. 

\begin{figure}[H]
\centering
\begin{tikzpicture}[->, >=stealth, auto, node distance=3cm, thick]
  \node[circle, draw] (0) {$0$};
  \node[circle, draw, right of=0] (1) {$1$};

  \draw (0) edge[bend left=25] node[above] {$e_{0,1}$} (1);
  \draw (1) edge[bend left=25] node[below] {$e_{1,0}$} (0);

  \draw (0) edge[loop above] node {$e_{0,0}$} (0);
  \draw (1) edge[loop above] node {$e_{1,1}$} (1);
\end{tikzpicture}
\caption{two-vertex complete digraph with self-loops}\label{fig:digraph}
\end{figure}
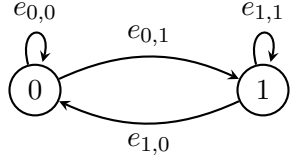

Define a transformation $\Phi_{i,j}$ on $[0,1]^2$, $i, j \in \{0,1\}$,  by 
\[ \Phi_{i,j}(x,y) \coloneqq (f_{i,j}(x), g_{i,j}(y)), \ \ x, y \in [0,1].  \] 
Let 
\begin{equation*}
T(H_0, H_1) \coloneqq \left(  \Phi_{0,0}(H_0) \cup  \Phi_{0,1}(H_1), \Phi_{1,0}(H_0) \cup  \Phi_{1,1}(H_1)  \right), \ H_0, H_1 \subset [0,1]^2. 
\end{equation*}

We denote the graph of $\varphi_i$ by $K_i$, that is, $K_i \coloneqq \left\{(x, \varphi_i (x)) \middle| x \in [0,1] \right\}$, $i = 0,1$. 
Then $T(K_0, K_1) = (K_0, K_1)$.

\section{Affine case}\label{sec:affine}

In this section, 
we consider the case where 
\[ f_{0,0}(x) = p_{0} x, f_{0,1}(x) = (1-p_0) x +p_0, f_{1,0}(x) = p_{1} x, f_{1,1}(x) = (1-p_1) x +p_1, \]
and 
\[ g_{0,0}(x) = q_{0} x, g_{0,1}(x) = (1-q_0) x +q_0, g_{1,0}(x) = q_{1} x, g_{1,1}(x) = (1-q_1) x +q_1, \]
where $p_0, q_0, p_1, q_1 \in (0,1)$. 
These maps are strictly increasing contractions on $[0,1]$. 

\begin{Thm}\label{thm:affine}
(i) If $p_0 \ne q_0$ or $p_1 \ne q_1$, then 
both of the solutions $\varphi_0$ and $\varphi_1$ of  \eqref{eq:gd-dR-fe-def-alt} are singular.\\
(ii) If $p_0 = q_0$ and $p_1 = q_1$, then $\varphi_i (x) = x, x \in [0,1]$, $i = 0,1$. 
\end{Thm}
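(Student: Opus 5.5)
Part (ii) is a direct uniqueness check; part (i) reduces singularity to a comparison of lengths of corresponding dyadic-type intervals along a Markov chain.

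\textbf{Part (ii).} If $p_i=q_i$ then $f_{i,j}=g_{i,j}$ for all $i,j$. The pair $(\mathrm{id},\mathrm{id})$ satisfies \eqref{eq:gd-dR-fe-def-alt}, and it also satisfies the defining relation on $D^f_i$. Since $\varphi_i$ is determined on the dense set $D^f_i$ and is continuous (\cite[Proposition 5.1]{Okamura2025}), we get $\varphi_i=\mathrm{id}$.

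\textbf{Part (i): setup.} For $x\notin D^f_i$ with address $(i_1,i_2,\dots)$, set $i_0:=i$. The interval $I^f_i(i_1,\dots,i_n)$ has length $\prod_{k=1}^n a(i_{k-1},i_k)$, where $a(j,0)=p_j$ and $a(j,1)=1-p_j$. Likewise $I^g_i(i_1,\dots,i_n)$ has length $\prod_{k=1}^n b(i_{k-1},i_k)$ with $q$ in place of $p$. These interval lengths do not depend on the starting point beyond $i_0$, because the maps are affine.

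Under Lebesgue measure on $[0,1]$, the address sequence of $x$ (from vertex $i$) is a Markov chain on $\{0,1\}$ started at $i$. Its transition probabilities are $P(j\to0)=p_j$ and $P(j\to1)=1-p_j$, since the conditional proportion of $I^f_i(i_1,\dots,i_n,0)$ inside $I^f_i(i_1,\dots,i_n)$ is exactly $p_{i_n}$. The chain is irreducible and aperiodic, so it has a stationary distribution $\pi$ with $\pi_0,\pi_1>0$.

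\textbf{Part (i): singularity.} Fix an $x\notin D^f_i$ where $\varphi_i'(x)$ exists; by the monotone differentiation theorem this holds for a.e.\ $x$. Because $x$ lies in the nested intervals $I^f_i(i_1,\dots,i_n)$ whose lengths tend to $0$,
\[ \frac{|I^g_i(i_1,\dots,i_n)|}{|I^f_i(i_1,\dots,i_n)|}=\prod_{k=1}^n \frac{b(i_{k-1},i_k)}{a(i_{k-1},i_k)}\to \varphi_i'(x). \]
If $\varphi_i'(x)\in(0,\infty)$, the ratio of consecutive terms, $b(i_{n-1},i_n)/a(i_{n-1},i_n)$, must tend to $1$. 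This ratio takes only finitely many values, so eventually every transition $(j,k)$ that occurs satisfies $b(j,k)=a(j,k)$. Suppose $p_j\ne q_j$. Then both $p_j/q_j\ne1$ and $(1-p_j)/(1-q_j)\ne1$. By ergodicity, a.e.\ path visits $j$ infinitely often, and every transition out of $j$ has ratio $\ne1$. This is a contradiction, so $\varphi_i'(x)\in\{0,\infty\}$ a.e.

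Since $\varphi_i$ is increasing, $\varphi_i'<\infty$ a.e.\ (Lebesgue). Hence $\varphi_i'=0$ a.e., for both $i=0,1$.

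\textbf{Main obstacle.} The delicate step is justifying that the derivative equals the limit of the ratios over the nested intervals containing $x$, even though these intervals are not centred at $x$. This is standard: $x$ lies in the interval, whose length tends to $0$, so the slopes of $\varphi_i$ over the two halves $[\text{left endpoint},x]$ and $[x,\text{right endpoint}]$ both converge to $\varphi_i'(x)$. The interval ratio is a convex combination of these two slopes, so it converges to $\varphi_i'(x)$ as well. The only care needed is when $x$ is an endpoint, and that happens only for $x\in D^f_i$, a countable set.

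An alternative avoiding derivatives is the strong law: $\frac1n\log\prod_k b/a \to \sum_j\pi_j\sum_k P(j\to k)\log\frac{b(j,k)}{a(j,k)}$. This limit is $-\sum_j\pi_j\,\mathrm{KL}(p_j\|q_j)<0$ when some $p_j\ne q_j$, so the product tends to $0$ exponentially.
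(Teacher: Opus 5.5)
Your proof is correct and is essentially the paper's argument: you write $|I^g_i|/|I^f_i|$ as a product of the finitely many ratios $q_{j,k}/p_{j,k}$, show that $\varphi_i'(x)\in(0,\infty)$ forces these factors to equal $1$ eventually, and conclude from $p_j\neq q_j$ that the address would have to stop leaving $j$. The only difference is in showing this exceptional set is null: the paper notes that such addresses are eventually constant, so there are countably many of them, each determining a single point; you instead invoke recurrence of the Markov chain that Lebesgue measure induces, which is valid but heavier than needed (you also supply the convex-combination justification of the derivative step, which the paper leaves implicit).
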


Set 
\[ p_{i,0} \coloneqq p_i, \, p_{i,1} \coloneqq 1 - p_i, \  \ q_{i,0} \coloneqq q_i, \, q_{i,1} \coloneqq 1 - q_i, \ i = 0,1. \]

\begin{proof}
(ii) is obvious, so we show (i). 
Assume that $x\in [0,1] \setminus D^{f}_0$ and $\varphi_0^{\prime}(x) > 0$.
Then, by Section \ref{sec:frame},  there exists a unique sequence $(i_n)_n$ in $\{0,1\}^{\mathbb N}$ such that $x \in I^{f}_{0} (i_1, \dots, i_n)$ for each $n \ge 1$. 
Then
\[ \lim_{n \to \infty} \frac{|I^{g}_{0} (i_1, \dots, i_n)|}{|I^{f}_{0} (i_1, \dots, i_n)|}  \]
\[ = \lim_{n \to \infty} \frac{g_{0, i_1} \circ g_{i_1, i_2} \circ \cdots \circ g_{i_{n-1}, i_n} (1) - g_{0, i_1} \circ g_{i_1, i_2} \circ \cdots \circ g_{i_{n-1}, i_n} (0)}{f_{0, i_1} \circ f_{i_1, i_2} \circ \cdots \circ f_{i_{n-1}, i_n} (1) - f_{0, i_1} \circ f_{i_1, i_2} \circ \cdots \circ f_{i_{n-1}, i_n} (0)} \]
\[ = \lim_{n \to \infty} \frac{\varphi_0 \left( f_{0, i_1} \circ f_{i_1, i_2} \circ \cdots \circ f_{i_{n-1}, i_n} (1) \right) - \varphi_0 \left( f_{0, i_1} \circ f_{i_1, i_2} \circ \cdots \circ f_{i_{n-1}, i_n} (0) \right)}{f_{0, i_1} \circ f_{i_1, i_2} \circ \cdots \circ f_{i_{n-1}, i_n} (1) - f_{0, i_1} \circ f_{i_1, i_2} \circ \cdots \circ f_{i_{n-1}, i_n} (0)} \]
\[ = \varphi_0^{\prime}(x) > 0.  \]
Since each map is affine, 
\[ \frac{|I^{g}_{0} (i_1, \dots, i_n)|}{|I^{f}_{0} (i_1, \dots, i_n)|} = \frac{q_{0 i_1}}{p_{0,i_1}} \frac{q_{i_1, i_2}}{p_{i_1, i_2}} \cdots \frac{q_{i_{n-1}, i_{n}}}{p_{i_{n-1}, i_n}}. \]
Hence 
\begin{equation}\label{eq:lim-ratio}
\lim_{n \to \infty} \frac{q_{i_{n} i_{n+1}}}{p_{i_{n} i_{n+1}}} = 1. 
\end{equation}

Set 
\[ \mathscr{R} \coloneqq \left\{\frac{q_{0}}{p_{0}}, \frac{1-q_{0}}{1-p_{0}}, \frac{q_{1}}{p_{1}}, \frac{1 - q_{1}}{1 - p_{1}}   \right\}. \]
Then for every $n \ge 1$, 
\[ \frac{q_{i_{n} i_{n+1}}}{p_{i_{n} i_{n+1}}}  \in \mathscr{R}. \]

We consider the case where $p_0 \ne q_0$ and $p_1 \ne q_1$. 
Then no element of $\mathscr{R}$ equals $1$. 
Hence \eqref{eq:lim-ratio} fails.

Therefore, it holds that  for every $x\in [0,1] \setminus D^{f}_0$ where $\varphi_0$ is differentiable, 
the derivative of $\varphi_0$ at $x$ is $0$. 

We consider the case where $p_0 = q_0$ and $p_1 \ne q_1$. 
Let $\mathcal{A}_0$ be the set of $(i_n)_n$ such that there exists $N$ such that for every $n \ge N$, $i_n = 0$.
If \eqref{eq:lim-ratio} holds, then there exists $N$ such that for every $n \ge N$, $(i_n, i_{n+1}) = (0,0)$ or $(i_n, i_{n+1}) = (0,1)$, and hence, $(i_n)_n \in \mathcal{A}_0$. 
Then 
\[ x \in \bigcup_{(i_n)_n \in \mathcal{A}_0} \bigcap_{n \ge 1} I^{f}_{0} (i_1, \dots, i_n).  \]
Since $0 < p_{0,0} < 1$, it holds that 
\[ \ell \left( I^{f}_{0} (i_1, \dots, i_n) \right) = \left| I^{f}_{0} (i_1, \dots, i_n) \right| = p_{0, i_1} p_{i_1, i_2} \cdots p_{i_{n-1}, i_n} \to 0, \ n \to \infty. \]
Since $\mathcal{A}_0$ is countable,  
we see that $\displaystyle \ell \left( \bigcup_{(i_n)_n \in \mathcal{A}_0} \bigcap_{n \ge 1} I^{f}_{0} (i_1, \dots, i_n)  \right) = 0$.

Therefore, it holds that  for $\ell$-a.e. $x\in [0,1] \setminus D^{f}_0$ where $\varphi_0$ is differentiable, 
the derivative of $\varphi_0$ at $x$ is $0$. 

The case where $p_0 \ne q_0$ and $p_1 = q_1$ can also be considered in the same manner. 
Thus we see that $\varphi_0$ is singular. 

In the same manner, we can show that $\varphi_1$ is singular. 
\end{proof}

\section{Linear fractional case}\label{sec:LF}

We say that a square matrix $A = \begin{pmatrix} a & b \\  c & d \end{pmatrix}$ is in the class $\mathscr{M}$ if 
$0 < a+b \le c+d$, $d > b \ge 0$, 
$ad - bc > 0$ and $\sqrt{ad-bc} \le \min\{d, c+d\}$. 
Let $\Phi(A;x) \coloneqq \dfrac{ax+b}{cx+d}$, which is the linear fractional transformation associated with $A$. 
If $A \in \mathscr{M}$, then, by \cite[Lemma 2.9]{Okamura2024}, 
$\Phi(A;x)$ is a strictly increasing weak contraction with some comparison function on $[0,1]$. 

We say that a compatible system $\{h_{0}, h_{1}\}$ is an {\it LF system} if there exist $A_0, A_1 \in \mathscr{M}$ such that $h_i (x) = \Phi(A_i; x)$, $i = 0,1$. 
See \cite[Subsection 2.3]{Okamura2024} for more details. 

In this section, 
we consider the case where 
\begin{equation}\label{eq:def-f-dyadic} 
f_{0,0}(x) = f_{1,0}(x) = \frac{x}{2}, \ \ f_{0,1}(x) = f_{1,1}(x) = \frac{x+1}{2}, 
\end{equation}
and each of $\{g_{0,0}, g_{0,1}\}$ and $\{g_{1,0}, g_{1,1}\}$ is an LF system. 
Assume that $g_{i,j}(x) = \Phi(A_{i,j}; x)$ for $A_{i,j} \in \mathscr{M}$. 
Let $A_{i,j} =  \begin{pmatrix} a_{i,j} & b_{i,j} \\  c_{i,j} & d_{i,j} \end{pmatrix}$. 
By the definition of LF systems, 
it holds that 
\begin{equation}\label{eq:cd-posi} 
\min_{x \in [0,1]} c_{i,j} x + d_{i,j} = \min\{d_{i,j}, c_{i,j} + d_{i,j}\} \ge \sqrt{a_{i,j} d_{i,j} - b_{i,j} c_{i,j}} > 0, \ i, j \in \{0,1\}, 
\end{equation} 
and 
\begin{equation}\label{eq:compatible-LF} 
0 = b_{i,0} < \dfrac{a_{i,0}}{c_{i,0} + d_{i,0}} = \dfrac{b_{i,1}}{d_{i,1}} < \frac{a_{i,1} + b_{i,1}}{c_{i,1} + d_{i,1}} = 1, \ \ i = 0,1.
\end{equation}

Let 
\begin{equation}\label{eq:def-alpha} 
\alpha_i \coloneqq \frac{c_{i,0} + d_{i,0}}{a_{i,0}} - 2 = \frac{d_{i,1}}{b_{i,1}} - 2, \ i = 0,1. 
\end{equation} 

If $c_{i,j} = 0$, then $g_{i,j}$ is an affine map. 
Therefore, the framework here contains the framework of Section \ref{sec:affine} with $p_0 = p_1 = 1/2$. 

We denote the transpose of a square matrix $A$ by $A^{\top}$. 
We remark that $\Phi \left(A_{i,j}^{\top}; \alpha_i \right)$ is well-defined. 
It suffices to show that $b_{i,j} \alpha_i + d_{i,j} \ne 0$. 
This is obvious if $j=0$ since $b_{i,j} = 0$ and $d_{i,j} > 0$.
If $j=1$, then $b_{i,1} \alpha_i + d_{i,1} = 2(d_{i,1} - b_{i,1}) > 0$. 

\begin{Thm}\label{thm:LF}
(i) If $\Phi \left(A_{i,j}^{\top}; \alpha_i \right) \ne \alpha_j$ holds for some $i, j \in \{0,1\}$, 
then, both of the solutions $\varphi_0$ and $\varphi_1$ of  \eqref{eq:gd-dR-fe-def-alt} are singular.\\
(ii) If $\Phi \left(A_{i,j}^{\top}; \alpha_i \right) = \alpha_j$ holds for every $i, j \in \{0,1\}$, 
then 
\begin{equation}\label{eq:solution-ac} 
\varphi_0 (x) = \frac{x}{-c_{0,0} x + 1 + c_{0,0}} \textup{ and } \varphi_1 (x) = \frac{x}{-c_{1,1} x + 1 + c_{1,1}}. 
\end{equation}
In particular, both $\varphi_0$ and $\varphi_1$ are smooth. 
\end{Thm}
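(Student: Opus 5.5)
The plan for (i) is a derivative-ratio argument on the dyadic intervals $I^f_0(i_1,\dots,i_n)$, which all have length $2^{-n}$, tracked through a projective parameter attached to the products of the matrices $A_{i,j}$. For (ii), the plan is to verify the explicit formulas and invoke uniqueness.

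\emph{Setup.} Fix $x\in(0,1)\setminus D^f_0$ with its unique digit sequence $(i_n)_n$. Put $G_n\coloneqq g_{0,i_1}\circ\cdots\circ g_{i_{n-1},i_n}=\Phi(M_n;\cdot)$, where $M_n=A_{0,i_1}\cdots A_{i_{n-1},i_n}=\begin{pmatrix}a_n&b_n\\ c_n&d_n\end{pmatrix}$.
\begin{itemize}
\item By \eqref{eq:cd-posi}, each $\Phi(A_{i,j};\cdot)$ has positive denominator on $[0,1]$. Since each map sends $[0,1]$ into $[0,1]$, inductively $c_n s+d_n>0$ on $[0,1]$, and $d_n>0$ after normalizing the sign.
\item So $\theta_n\coloneqq c_n/d_n\in(-1,\infty)$ is well defined.
\item Multiplying the bottom row $(c_n,d_n)$ by $A_{i_n,i_{n+1}}$ gives the recursion
\[ \theta_{n+1}=\Phi\left(A_{i_n,i_{n+1}}^{\top};\theta_n\right). \]
\item For a linear fractional $G$ with bottom row $(c,d)$ and $\theta=c/d$, one has $|G([s,t])|=\det(M)(t-s)/((cs+d)(ct+d))$. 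Put $s_i\coloneqq g_{i,0}(1)=1/(\alpha_i+2)$ by \eqref{eq:compatible-LF}--\eqref{eq:def-alpha}. Then the ratio of child to parent interval is
\[ \frac{|I^g_0(i_1,\dots,i_n,0)|}{|I^g_0(i_1,\dots,i_n)|}=\frac{s_{i_n}(1+\theta_n)}{1+\theta_n s_{i_n}}, \]
and the complementary expression holds for the child digit $1$.
\end{itemize}

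\emph{Part (i).} If $\varphi_0'(x)>0$, then, as in the proof of Theorem \ref{thm:affine}, $2^n|I^g_0(i_1,\dots,i_n)|\to\varphi_0'(x)$, so consecutive ratios tend to $1/2$. Solving $s(1+\theta)/(1+\theta s)=1/2$ gives $\theta=(1-2s)/s=\alpha_{i_n}$; the digit-$1$ child yields the same equation. Since $\alpha_0,\alpha_1$ take only two values and the ratio is a continuous, strictly monotone function of $\theta$, I expect to conclude $\theta_n-\alpha_{i_n}\to0$.

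Combining this with the recursion and continuity of $\theta\mapsto\Phi(A^{\top};\theta)$ near $\alpha_i$, I get $\Phi(A_{i_n,i_{n+1}}^{\top};\alpha_{i_n})-\alpha_{i_{n+1}}\to0$. There are only four transitions, so eventually only transitions $(i,j)$ with $\Phi(A_{i,j}^{\top};\alpha_i)=\alpha_j$ occur. If some transition $(i,j)$ fails this equality, $x$ must lie in the set of points whose binary digits eventually never contain the consecutive pair $(i,j)$. This set is $\ell$-null, because under $\ell$ the binary digits are i.i.d.\ fair coins and the pattern $(i,j)$ occurs infinitely often almost surely by Borel--Cantelli on disjoint blocks. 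Together with the monotone differentiation theorem and countability of $D^f_0$, this makes $\varphi_0$ singular. The same argument started at vertex $1$ handles $\varphi_1$.

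The delicate point is the first step, $\theta_n\to\alpha_{i_n}$. I will handle it by noting that the map $\theta\mapsto s(1+\theta)/(1+\theta s)$ is a homeomorphism of $(-1,\infty)$ onto its image for each of the two values of $s$, with the target value $1/2$ at $\theta=\alpha_i$. Hence convergence of the ratio forces $\theta_n$ into shrinking neighbourhoods of $\alpha_{i_n}$.

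\emph{Part (ii).} Let $B_i\coloneqq\begin{pmatrix}1&0\\-c_{i,i}&1+c_{i,i}\end{pmatrix}$, $S_0\coloneqq\begin{pmatrix}1&0\\0&2\end{pmatrix}$ and $S_1\coloneqq\begin{pmatrix}1&1\\0&2\end{pmatrix}$, so that $\psi_i(x)=\Phi(B_i;x)$ are the functions in \eqref{eq:solution-ac} and $f_{i,j}=\Phi(S_j;\cdot)$. It suffices to check the matrix identities $A_{i,j}B_j\propto B_iS_j$ for all $i,j$. Then $\psi_0,\psi_1$ are increasing continuous solutions of \eqref{eq:gd-dR-fe-def-alt}, and uniqueness from \cite[Section 5]{Okamura2025} gives $\varphi_i=\psi_i$.

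The hypothesis $\Phi(A_{i,i}^{\top};\alpha_i)=\alpha_i$ together with the compatibility \eqref{eq:compatible-LF} should pin down the entries of $A_{i,i}$ and force $\alpha_i=c_{i,i}$ or a similarly simple relation. The cross conditions $\Phi(A_{i,j}^{\top};\alpha_i)=\alpha_j$ then should give the off-diagonal identities. This entrywise algebra is the main computational obstacle. I would first normalize each $A_{i,j}$ by scaling and then solve the four scalar identities directly.
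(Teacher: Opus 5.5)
\textbf{Part (i).} Your argument is correct and is essentially the paper's. Your $\theta_n=c_n/d_n$ is the paper's $r_n/s_n$. Your child/parent ratios $s_{i_n}(1+\theta_n)/(1+s_{i_n}\theta_n)$ and $(1-s_{i_n})/(1+s_{i_n}\theta_n)$ are exactly the paper's $t_n$ in \eqref{eq:rs-zero}--\eqref{eq:rs-one}. The conclusion via the recursion \eqref{eq:r-over-s} and normal numbers is also the same. Your ``eventually only good transitions occur'' is just the contrapositive of the paper's ``if all four transitions recur, all four fixed-point relations hold.'' One small check you omit: continuity of $\Phi(A_{i,j}^{\top};\cdot)$ at $\alpha_i$ needs $b_{i,j}\alpha_i+d_{i,j}\neq0$. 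The paper verifies this; for $j=1$ the quantity equals $2(d_{i,1}-b_{i,1})>0$.

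\textbf{Part (ii).} This has the same skeleton as the paper: exhibit explicit solutions, then invoke uniqueness. The identities $A_{i,j}B_j\propto B_iS_j$ you aim for are in fact true. However, you never derive them from the four hypotheses, and that derivation is the whole content of (ii). As written, ``should pin down'' leaves (ii) unproved.

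There is also a normalization issue. The formula in terms of $c_{i,i}$ is scale-dependent and is correct only after normalizing $a_{0,0}=1$ and $b_{1,1}=1$. Under that normalization the diagonal conditions read
\[ (c_{0,0}+d_{0,0}-1)(d_{0,0}-2)=0, \qquad (c_{1,1}-d_{1,1}+2)(d_{1,1}-1)=0. \]
These force $d_{0,0}=2$ and $d_{1,1}=c_{1,1}+2$, hence $\alpha_i=c_{i,i}$. The cross conditions together with \eqref{eq:compatible-LF} then determine $A_{0,1}$ and $A_{1,0}$, after which your identities are a direct check.

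A computation-light way to close the gap is the following.
\begin{itemize}
\item Write $B_i$ with $\alpha_i$ in place of $c_{i,i}$. Then $\psi_i(0)=0$, $\psi_i(1/2)=1/(\alpha_i+2)=g_{i,0}(1)$ and $\psi_i(1)=1$. Since $1+\alpha_i>0$, $\psi_i$ is an increasing homeomorphism of $[0,1]$, so uniqueness applies.
\item The maps $\psi_i^{-1}\circ g_{i,j}=\Phi(C_iA_{i,j};\cdot)$, with $C_i=\begin{pmatrix}1+\alpha_i&0\\ \alpha_i&1\end{pmatrix}$, and $f_{i,j}\circ\psi_j^{-1}=\Phi(S_jC_j;\cdot)$ agree at $0$ and $1$. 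The values are $0,\tfrac12$ for $j=0$ and $\tfrac12,1$ for $j=1$, using $g_{i,1}(0)=g_{i,0}(1)$.
\item The hypothesis $\Phi(A_{i,j}^{\top};\alpha_i)=\alpha_j$ says exactly that the bottom row $(\alpha_i,1)A_{i,j}$ is proportional to $(\alpha_j,1)$. That is, up to a scalar, the bottom row of $S_jC_j$, which is $2(\alpha_j,1)$. So both maps send $-1/\alpha_j\in\hat{\mathbb{R}}\setminus\{0,1\}$ to $\infty$.
\item Three-point agreement of M\"obius maps gives $g_{i,j}\circ\psi_j=\psi_i\circ f_{i,j}$.
\end{itemize}
You would then still use the diagonal computation above to identify $\alpha_i$ with the normalized $c_{i,i}$ in the stated formula.
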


\begin{proof}
(i) We show that $\varphi_0$ is singular. 

We first remark that  for every $y\in [0,1] \setminus D^{f}_0$ there exists a unique sequence $(i_n)_n$ in $\{0,1\}^{\mathbb N}$ such that $y \in I^{f}_{0} (i_1, \dots, i_n)$ for each $n \ge 1$. 

Assume that $x\in [0,1] \setminus D^{f}_0$ and $\varphi_0^{\prime}(x) > 0$.
Let $(i_n)_n$ be the unique sequence in $\{0,1\}^{\mathbb N}$ such that $x \in I^{f}_{0} (i_1, \dots, i_n)$ for each $n \ge 1$. 
Let 
\[ \begin{pmatrix} p_n & q_n \\  r_n & s_n \end{pmatrix} = A_{0,i_1} A_{i_1, i_2} \cdots A_{i_{n-1}, i_n}. \]
By the definition of $g_{i,j}$, it holds that 
\[ g_{0, i_1} \circ g_{i_1, i_2} \circ \cdots \circ g_{i_{n-1}, i_n} (x) = \Phi \left(A_{0,i_1} A_{i_1, i_2} \cdots A_{i_{n-1}, i_n}; x\right) = \frac{p_n x + q_n}{r_n x + s_n}, \ x \in [0,1]. \]

We show that $\min\{s_n, r_n + s_n\} > 0$ by induction on $n$. 
The case where $n=1$ is obvious. 
Assume that $\min\{s_n, r_n + s_n\} > 0$. 
Then $\displaystyle \min_{x \in [0,1]} r_n x + s_n = \min\{s_n, r_n + s_n\} > 0$. 
By \eqref{eq:cd-posi}, it holds that $d_{i_n, i_{n+1}} > 0$ and $c_{i_n, i_{n+1}} + d_{i_n, i_{n+1}} > 0$. 
By \eqref{eq:compatible-LF}, we see that 
$ \dfrac{b_{i_n, i_{n+1}}}{d_{i_n, i_{n+1}}} \in [0,1]$ and $\dfrac{a_{i_n, i_{n+1}} + b_{i_n, i_{n+1}}}{c_{i_n, i_{n+1}} + d_{i_n, i_{n+1}}}  \in [0,1]$. 
Since 
\[ (s_{n+1}, r_{n+1} + s_{n+1}) \]
\begin{equation}\label{eq:rs-iteration} 
 = \left(b_{i_n, i_{n+1}} r_{n} + d_{i_n, i_{n+1}} s_n,  (a_{i_n, i_{n+1}} + b_{i_n, i_{n+1}}) r_{n} + (c_{i_n, i_{n+1}} + d_{i_n, i_{n+1}}) s_n \right), 
\end{equation}
we obtain that 
\[ s_{n+1} = d_{i_n, i_{n+1}} \left(r_n \frac{b_{i_n, i_{n+1}}}{d_{i_n, i_{n+1}}} + s_n \right) > 0 \]
and 
\[ r_{n+1} + s_{n+1} = (c_{i_n, i_{n+1}} + d_{i_n, i_{n+1}}) \left(r_n \frac{a_{i_n, i_{n+1}} + b_{i_n, i_{n+1}}}{c_{i_n, i_{n+1}} + d_{i_n, i_{n+1}}} + s_n \right) > 0. \]

Since $b_{i,0} = 0 < d_{i,0}$, $ \Phi \left(A_{i, 0}^{\top} ; x\right)$ is well-defined for every $x \in \mathbb{R}$ and $i=0,1$. 
Since $\frac{r_n}{s_n} > -1 > -\frac{d_{i,1}}{b_{i,1}}$, $ \Phi \left(A_{i, 1}^{\top}; \frac{r_n}{s_n}\right)$ is well-defined for $i=0,1$ and $n \ge 1$. 
Therefore, we obtain that for every $n \ge 1$, 
\begin{equation}\label{eq:r-over-s} 
\frac{r_{n+1}}{s_{n+1}} = \Phi \left(A_{i_n, i_{n+1}}^{\top} ; \frac{r_n}{s_n} \right)
\end{equation}

By the definition of $f_{i,j}$ in \eqref{eq:def-f-dyadic}, it holds that $D^{f}_{0}$ is the set of dyadic rationals on $[0,1]$ and 
\[ \frac{|I^{g}_{0} (i_1, \dots, i_n)|}{|I^{f}_{0} (i_1, \dots, i_n)|} = 2^n \left( \frac{p_n + q_n}{r_n + s_n} - \frac{q_n}{s_n} \right) = 2^n \frac{p_n s_n - q_n r_n}{s_n (r_n + s_n)}. \]
Since 
$\displaystyle \lim_{n \to \infty} \frac{|I^{g}_{0} (i_1, \dots, i_n)|}{|I^{f}_{0} (i_1, \dots, i_n)|}  = \varphi_0^{\prime}(x) > 0$, 
we obtain that 
\[ \lim_{n \to \infty} \frac{|I^{g}_{0} (i_1, \dots, i_{n+1})| / |I^{f}_{0} (i_1, \dots, i_{n+1})| }{|I^{g}_{0} (i_1, \dots, i_n)| / |I^{f}_{0} (i_1, \dots, i_n)| } = 1. \]
This is equivalent to  
\[ \lim_{n \to \infty} \frac{\det(A_{i_n, i_{n+1}}) s_n (r_n + s_n)}{s_{n+1} (r_{n+1} + s_{n+1})} = \frac{1}{2}. \]

By \eqref{eq:rs-iteration}, this is equivalent to  
\[ \lim_{n \to \infty} t_n = \frac{1}{2}, \]
where we let 
\[ t_n \coloneqq \frac{\det(A_{i_n, i_{n+1}}) (\frac{r_n}{s_n} + 1)}{(b_{i_n, i_{n+1}} \frac{r_{n}}{s_n} + d_{i_n, i_{n+1}}) ((a_{i_n, i_{n+1}} + b_{i_n, i_{n+1}}) \frac{r_{n}}{s_n} + c_{i_n, i_{n+1}} + d_{i_n, i_{n+1}})}. \]
If $i_{n+1} = 0$, then $b_{i_n, i_{n+1}} = 0$ and hence 
\begin{equation}\label{eq:rs-zero} 
t_n = \frac{\frac{r_n}{s_n} + 1}{\frac{r_n}{s_n} + \frac{c_{i_n, 0} + d_{i_n, 0}}{a_{i_n, 0}}}. 
\end{equation} 
If $i_{n+1} = 1$, then $a_{i_n, i_{n+1}} + b_{i_n, i_{n+1}} = c_{i_n, i_{n+1}} + d_{i_n, i_{n+1}} > 0$ and hence 
\begin{equation}\label{eq:rs-one} 
t_n = \frac{\frac{d_{i_n,1}}{b_{i_n,1}} - 1}{\frac{r_n}{s_n} + \frac{d_{i_n, 1}}{b_{i_n, 1}}}. 
\end{equation}

For $y \in [0,1] \setminus  D_0^f$, let 
\[ \mathbb{N}_{i,j}(y) \coloneqq \left\{n \in \mathbb{N} \colon (i_n, i_{n+1}) = (i,j) \right\}, \ i, j \in \{0,1\},  \]
and 
\[ \mathbb{N}_{i}(y) \coloneqq \mathbb{N}_{i,0}(y) \cup \mathbb{N}_{i,1}(y), \ i \in \{0,1\}.  \]
Then $\{\mathbb{N}_{i,j}(y)\}_{i,j}$ are disjoint and $\mathbb{N} = \mathbb{N}_{0}(y) \cup \mathbb{N}_{1}(y)$. 
Let $\mathcal{N}$ be the set of $y \in [0,1] \setminus  D_0^f$ such that $\mathbb{N}_{i,j}(y)$ is finite for some $(i, j)$. 

Assume that $x \in [0,1] \setminus  \mathcal{N}$. 
Recall \eqref{eq:def-alpha}. 
Then, 
by \eqref{eq:rs-zero} and \eqref{eq:rs-one}, 
\[ \lim_{n \to \infty; n \in \mathbb{N}_i (x)} \frac{r_n}{s_n} = \alpha_i, \ i \in \{0,1\}.  \]
By this and \eqref{eq:r-over-s}, 
it holds that 
$\Phi \left(A_{i,j}^{\top}; \alpha_i \right) = \alpha_j$ for every $i, j \in \{0,1\}$. 
By the assumption, $x \in \mathcal{N}$. 
By the Borel normal number theorem, $\ell(\mathcal{N}) = 0$. 

Thus we see that $\varphi_0$ is singular. 
In the same manner, we also see that $\varphi_1$ is singular. 

(ii) We first remark that $\Phi(A;x) = \Phi(cA;x)$ for every $c \ne 0$ and every square matrix $A$. 
Since $a_{0,0}, a_{1,0}, b_{0,1}$ and $b_{1,1}$ are all positive, 
we can assume that $a_{0,0} = a_{1,0} = b_{0,1} = b_{1,1} = 1$. 

Since $\alpha_0 = c_{0,0} + d_{0,0} - 2$, 
the equation $\Phi \left(A_{0,0}^{\top}; \alpha_0 \right) = \alpha_0$ is equivalent to  the equation $(c_{0,0} + d_{0,0} -1)(d_{0,0} - 2) = 0$. 
Since $c_{0,0} + d_{0,0} > a_{0,0} = 1$,  we obtain that $d_{0,0} = 2$. 
By this and $b_{0,0} = 0$, 
we obtain that 
$A_{0,0} = \begin{pmatrix} 1 & 0 \\ c_{0,0} & 2 \end{pmatrix}$.  
Hence, 
$g_{0,0}(x) = \dfrac{x}{c_{0,0} x + 2}$.

Since $\alpha_1 =  d_{1,1} - 2$, 
the equation $\Phi \left(A_{1,1}^{\top}; \alpha_1 \right) = \alpha_1$ is equivalent to  the equation $(c_{1,1} - d_{1,1} + 2)(d_{1,1} - 1) = 0$. 
Since $d_{1,1} > b_{1,1} = 1$, we obtain that $d_{1,1} = c_{1,1} + 2$. 
By this and $a_{1,1} + 1 = c_{1,1} + d_{1,1}$, 
we obtain that 
$A_{1,1} = \begin{pmatrix} 2c_{1,1} + 1 & 1 \\ c_{1,1} & c_{1,1} + 2 \end{pmatrix}$.  
Hence, 
$g_{1,1}(x) = \dfrac{(2c_{1,1} + 1)x + 1}{c_{1,1} x + c_{1,1} + 2}$. 

It also holds that $\alpha_i = c_{i,i}, \, i = 0,1$.

We see that the equation $\Phi \left(A_{0,1}^{\top}; \alpha_0 \right) = \alpha_1$ is equivalent to  the equation $(c_{0,0} + 1)c_{0,1} + c_{0,0} (d_{0,1} - 1) = 2c_{1,1} (c_{0,0} + 1)$. 
By \eqref{eq:compatible-LF}, 
it holds that $d_{0,1} = c_{0,0} + 2$.  
Hence $c_{0,1} = 2 c_{1,1} - c_{0,0}$. 
Hence $a_{0,1} = c_{0,1} + d_{0,1} - 1 = 2c_{1,1} + 1$. 
Thus we obtain that $A_{0,1} = \begin{pmatrix} 2c_{1,1} + 1 & 1 \\ 2c_{1,1} - c_{0,0} & c_{0,0} + 2 \end{pmatrix}$.  
Hence, 
$g_{0,1}(x) = \dfrac{(2c_{1,1} + 1)x + 1}{(2c_{1,1}-c_{0,0}) x + c_{0,0} + 2}$. 

We see that the equation $\Phi \left(A_{1,0}^{\top}; \alpha_1 \right) = \alpha_0$ is equivalent to  the equation $c_{1,0} - c_{0,0} d_{1,0} = -c_{1,1}$. 
By \eqref{eq:compatible-LF}, 
it holds that $c_{1,0} + d_{1,0} = c_{1,1} + 2$.  
By solving the system of linear equations, we obtain that 
\[ c_{1,0} = \frac{c_{0,0} c_{1,1} + 2 c_{0,0} - c_{1,1}}{c_{0,0} + 1} \textup{ and }  d_{1,0} = \frac{2(c_{1,1} + 1)}{c_{0,0} + 1}. \]
By this and $b_{1,0} = 0$, 
we obtain that 
$A_{1,0} = \begin{pmatrix} 1 & 0 \\  \frac{c_{0,0} c_{1,1} + 2 c_{0,0} - c_{1,1}}{c_{0,0} + 1}  & \frac{2(c_{1,1} + 1)}{c_{0,0} + 1} \end{pmatrix}$.  
Hence, 
$g_{1,0}(x) = \dfrac{(c_{0,0} + 1)x}{(c_{0,0} c_{1,1} + 2 c_{0,0} - c_{1,1}) x + 2(c_{1,1} + 1)}$. 

Now by computations, we see that the functions $\varphi_0$ and $\varphi_1$ defined in \eqref{eq:solution-ac} satisfy \eqref{eq:gd-dR-fe-def-alt}.  
\end{proof}

\begin{Rem}
(i) The case where $A_{0,0} = A_{1,0}$ and $A_{0,1} = A_{1,1}$ implies the regularity and singularity results of \cite[Theorem 1.2]{Okamura2014}. \\
(ii) Assume that $\Phi \left(A_{i,j}^{\top}; \alpha_i \right) = \alpha_j$ holds for every $i, j \in \{0,1\}$. 
Then the matrices $A_{i,j}, i, j \in \{0,1\}$, are determined by $(c_{0,0}, c_{1,1})$. 
The inequalities in \eqref{eq:cd-posi} hold for every $i, j \in \{0,1\}$ if and only if the following conditions hold: 
\begin{equation}\label{eq:complex-1} 
\begin{cases} c_{0,0} \ge \sqrt{2} - 2 \\ c_{1,1} \le \sqrt{2} \\ c_{0,0} \le 2c_{1,1} + 1 \\ 2(c_{0,0} + 1)(c_{1,1} + 1) \le (c_{0,0} + 2)^2 \\ 2(c_{1,1} + 1) \le (c_{0,0} + 1) (c_{1,1} + 2)^2  \end{cases}. 
\end{equation}

The resulting region is rather complicated and is shown in the following figure: 
\begin{figure}[H]
\centering
\includegraphics[scale=0.3]{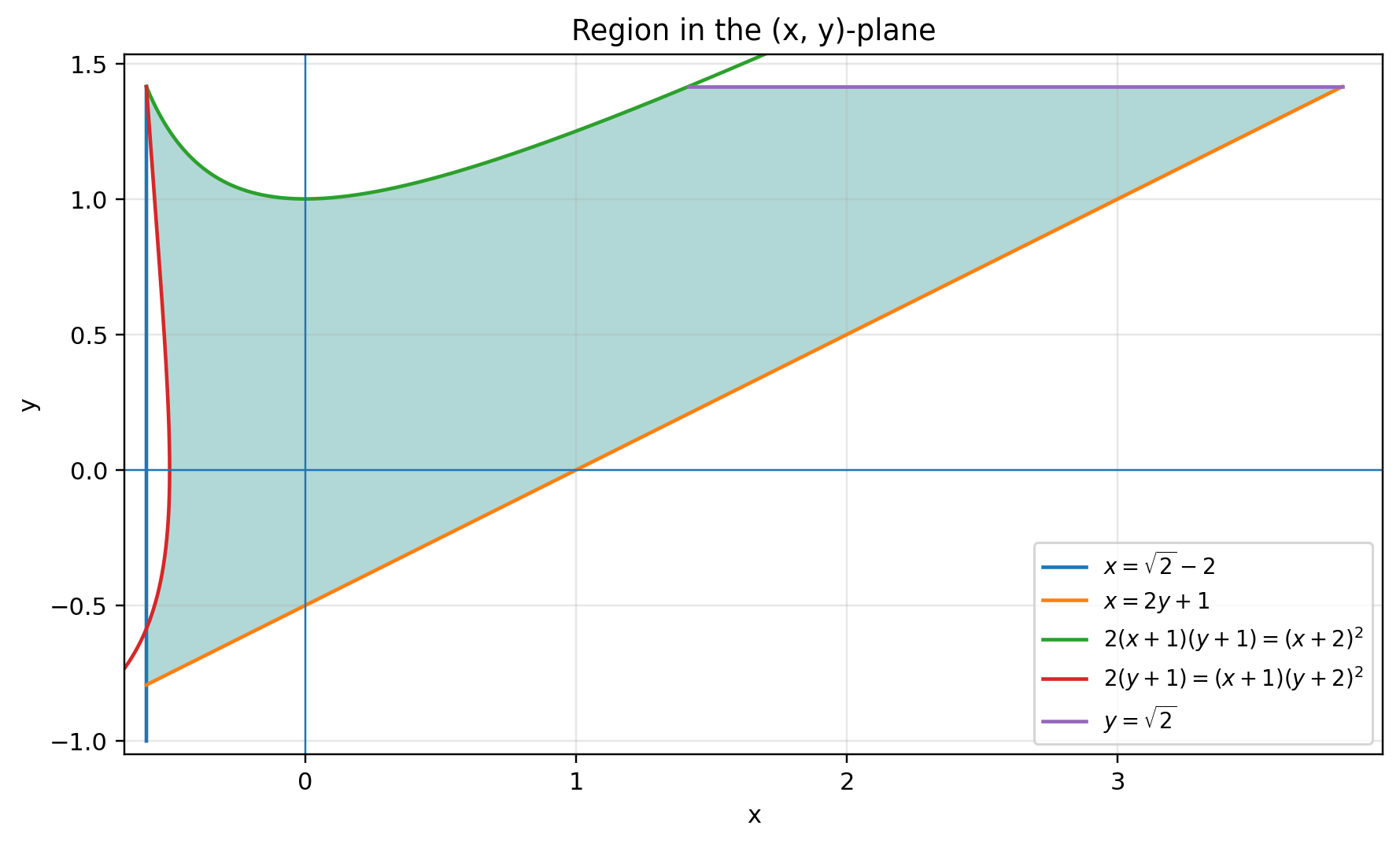}
\caption{\footnotesize admissible region in the $(c_{0,0}, c_{1,1})$-plane}
\end{figure}

By symmetry, the function $1- g_{1,1}(1-x)$ corresponds to $g_{0,0}(x)$ and it holds that 
\[ 1- g_{1,1}(1-x) = \frac{x}{-\frac{c_{1,1}}{c_{1,1} + 1} x + 2}. \]
Thus $c_{1,1}^{\prime} \coloneqq - \frac{c_{1,1}}{c_{1,1} + 1}$ corresponds to $c_{0,0}$. 
This map is an involution, specifically, $c_{1,1} = - \frac{c_{1,1}^{\prime}}{c_{1,1}^{\prime} + 1}$. 
Then \eqref{eq:complex-1} is equivalent to  
\begin{equation}\label{eq:complex-2} 
\begin{cases} c_{0,0} \ge \sqrt{2} - 2 \\ c_{1,1}^{\prime} \ge \sqrt{2} -2 \\ (c_{0,0} + 1)(c_{1,1}^{\prime} + 1) \le 2 \\ c_{0,0} \ge 2\frac{c_{1,1}^{\prime} + 1}{(c_{1,1}^{\prime} + 2)^2} -1  \\ c_{1,1}^{\prime} \ge 2\frac{c_{0,0} + 1}{(c_{0,0} + 2)^2} -1  \end{cases}. 
\end{equation}

The corresponding region for \eqref{eq:complex-2} is shown in the following figure: 
\begin{figure}[H]
\centering
\includegraphics[scale=0.3]{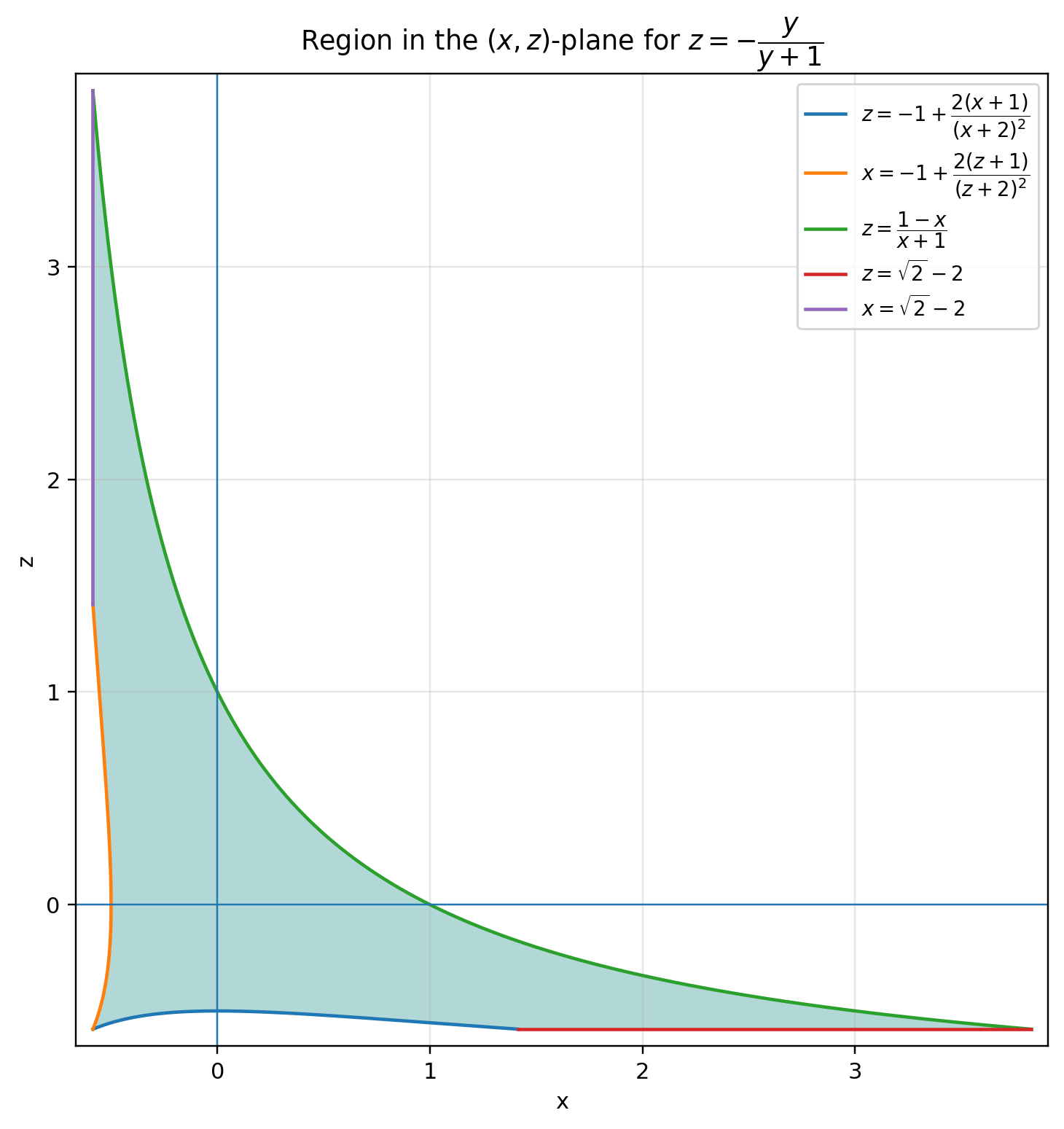}
\caption{\footnotesize admissible region in the $(c_{0,0}, c_{1,1}^{\prime})$-plane}
\end{figure}
This domain  is symmetric with respect to $x=z$. 

If $c_{0,0} = c_{1,1}$, then $A_{0,0} = A_{1,0}$ and $A_{0,1} = A_{1,1}$. 
Furthermore, $\varphi_0 (x) = \varphi_1 (x)$ for every $x \in [0,1]$. 
\end{Rem}

\section{Non-linear case}\label{sec:nl}

In this section, we consider the case where \eqref{eq:def-f-dyadic} holds and each $\{g_{i,0}, g_{i,1}\}$ is merely assumed to be a compatible system, $i = 0,1$. 
For $h \colon [0,1] \to \mathbb{R}$, 
let $\|h\|_{\textup{Lip}} \coloneqq \sup\left\{ \frac{|h(x) - h(y)|}{|x-y|} \colon x \ne y  \right\}$. 
The following result is similar to \cite[Theorem 7.3]{Hata1985}. 

\begin{Thm}\label{thm:nl}
If $\prod_{i,j \in \{0,1\}} \left\| g_{i,j} \right\|_{\textup{Lip}} < \frac{1}{16}$, then both of the solutions $\varphi_0$ and $\varphi_1$ of  \eqref{eq:gd-dR-fe-def-alt} are singular. 
\end{Thm}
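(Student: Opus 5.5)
We follow the scheme of Theorems \ref{thm:affine} and \ref{thm:LF}. Fix $i=0$; the case of $\varphi_1$ is identical. Let $L_{i,j} \coloneqq \|g_{i,j}\|_{\textup{Lip}}$. Suppose $x \in [0,1] \setminus D^f_0$ is a point where $\varphi_0$ is differentiable with $\varphi_0'(x) > 0$, and let $(i_n)_n$ be its unique address. Since $D^f_0$ is the set of dyadic rationals and $|I^f_0(i_1,\dots,i_n)| = 2^{-n}$, the argument used in the proofs of Theorems \ref{thm:affine} and \ref{thm:LF} gives
\[ \lim_{n\to\infty} 2^n \left|I^g_0(i_1,\dots,i_n)\right| = \varphi_0'(x) > 0 . \]
Setting $i_0 \coloneqq 0$, we have $|I^g_0(i_1,\dots,i_n)| \le \prod_{k=1}^{n} L_{i_{k-1},i_k}$, because the composition $g_{0,i_1}\circ\cdots\circ g_{i_{n-1},i_n}$ has Lipschitz constant at most this product and the interval is the image of $[0,1]$. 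Hence
\[ 0 < \varphi_0'(x) = \lim_{n\to\infty} 2^n \left|I^g_0(i_1,\dots,i_n)\right| \le \liminf_{n\to\infty} \prod_{k=1}^{n} \left(2 L_{i_{k-1},i_k}\right) . \]
Therefore we must have $\liminf_n \frac{1}{n}\sum_{k=1}^n \log(2L_{i_{k-1},i_k}) \ge 0$; in particular the right-hand side must not tend to $0$. If some $L_{i,j}=0$ the map would be constant, contradicting strict monotonicity, so every $\log L_{i,j}$ is finite.

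Next we use normality. By the Borel normal number theorem, for $\ell$-a.e.\ $x$ the binary digits $(i_n)_n$ (the address with respect to the dyadic maps \eqref{eq:def-f-dyadic} is exactly the binary expansion) satisfy
\[ \frac{1}{n}\#\{1\le k\le n : (i_{k-1},i_k)=(i,j)\} \to \frac14 \quad \text{for all } i,j \in \{0,1\} . \]
This holds for $k\ge 2$ by normality with respect to blocks of length $2$; the single term $k=1$, involving $i_0=0$, is irrelevant to the limit. For such $x$,
\[ \frac1n \sum_{k=1}^n \log\left(2L_{i_{k-1},i_k}\right) \to \frac14 \sum_{i,j}\log\left(2L_{i,j}\right) = \frac14 \log\left(16 \prod_{i,j} L_{i,j}\right) < 0 \]
by the hypothesis. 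Consequently $\prod_{k=1}^n (2L_{i_{k-1},i_k}) \to 0$ exponentially fast, which contradicts $\varphi_0'(x)>0$. Thus $\varphi_0'(x)=0$ for $\ell$-a.e.\ $x$ at which $\varphi_0$ is differentiable. By the monotone differentiation theorem this is $\ell$-a.e.\ $x$, and the countable set $D^f_0$ is negligible, so $\varphi_0$ is singular.

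The only step requiring care is the frequency statement for the pairs $(i_{k-1},i_k)$. It follows from normality of $\ell$-a.e.\ $x$ in base $2$ for blocks of length $2$, which is part of Borel's theorem in its standard form; alternatively, it follows from the ergodic theorem for the doubling map applied to the indicator of the dyadic intervals of length $1/4$. Everything else reduces to the Lipschitz bound on the lengths of the $I^g_0$-intervals.
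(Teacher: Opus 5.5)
Your proof is correct and follows essentially the same route as the paper: at a non-dyadic point with $\varphi_0'(x)>0$ you use $2^n|I^g_0(i_1,\dots,i_n)|\to\varphi_0'(x)$, bound $|I^g_0(i_1,\dots,i_n)|$ by the product of Lipschitz constants along the address, and invoke Borel normality for length-two blocks to force $2^n|I^g_0(i_1,\dots,i_n)|\to 0$ almost everywhere. The differences are cosmetic: you keep the initial factor $L_{0,i_1}$ (the paper bounds it by $1$ and drops it), and you make the exponential decay explicit by averaging logarithms.
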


\begin{proof}
We show that $\varphi_0$ is singular. 
Define $\mathbb{N}_{i,j}(y)$ as in the proof of Theorem \ref{thm:LF}. 
Let $\mathcal{S}$ be the set of $y \in [0,1] \setminus  D^f_0$ such that 
\begin{equation}\label{eq:pattern-Borel-normal-lim}
\lim_{N \to \infty} \frac{\left|\mathbb{N}_{i,j}(y) \cap \{1, \dots, N\}\right|}{N} = \frac{1}{4}
\end{equation}
for every $i, j \in \{0,1\}$.

Assume that $x\in [0,1] \setminus D^{f}_0$ and $\varphi_0^{\prime}(x) > 0$.
Let $(i_n)_n$ be the unique sequence in $\{0,1\}^{\mathbb N}$ such that $x \in I^{f}_{0} (i_1, \dots, i_n)$ for each $n \ge 1$. 
We remark that $\| g_{i,j} \|_{\textup{Lip}} \le 1$. 
We see that for every $N \ge 1$, 
\[ \left|I^{g}_{0} (i_1, \dots, i_N)\right| \le \prod_{k=1}^{N-1} \left\| g_{i_k, i_{k+1}} \right\|_{\textup{Lip}} = \prod_{i,j \in \{0,1\}} \left\| g_{i,j} \right\|_{\textup{Lip}}^{N_{i,j}} \]
where we let $N_{i,j} \coloneqq \left|\mathbb{N}_{i,j}(x) \cap \{1, \dots, N-1\}\right|$. 
Assume that $x \in \mathcal{S}$. 
Then by the assumption and \eqref{eq:pattern-Borel-normal-lim}, 
\[ \lim_{N \to \infty} 2^N  \left|I^{g}_{0} (i_1, \dots, i_N)\right| = 0. \]
On the other hand, 
\[ \lim_{N \to \infty} 2^N  \left|I^{g}_{0} (i_1, \dots, i_N)\right| = \lim_{N \to \infty} \frac{|I^{g}_{0} (i_1, \dots, i_N)|}{|I^{f}_{0} (i_1, \dots, i_N)|}   = \varphi_0^{\prime}(x)  > 0. \]
Hence $x \in [0,1] \setminus  \mathcal{S}$. 
By the Borel normal number theorem, $\ell(\mathcal{S}) = 1$. 
Thus we see that $\varphi_0$ is singular. 
In the same manner, we can show that $\varphi_1$ is singular. 
\end{proof}

The above proof is partly different from the proof of \cite[Theorem 7.3]{Hata1985}, since we can apply the monotone differentiation theorem. 

\section{Examples}\label{sec:exa}

In this section, we give four examples with graphs of the solutions. 

\begin{Exa}\label{exa:affine}
Let 
\[ f_{0,0}(x) =  \frac{x}{2}, \ f_{0,1}(x) = \frac{x +1}{2}, \ \ f_{1,0}(x) = \frac{x}{3}, \ f_{1,1}(x) = \frac{2x + 1}{3}, \]
and 
\[ g_{0,0}(x) = \frac{x}{4}, \ g_{0,1}(x) = \frac{3x + 1}{4}, \ \  g_{1,0}(x) = \frac{x}{5}, \ g_{1,1}(x) = \frac{4x +1}{5}. \]
By Theorem \ref{thm:affine}, the solutions $\varphi_0$ and $\varphi_1$ are both singular. 
The graphs of the solutions are given as follows: 
\begin{figure}[H]
\begin{minipage}{0.49\columnwidth}
\centering
\includegraphics[scale=0.4]{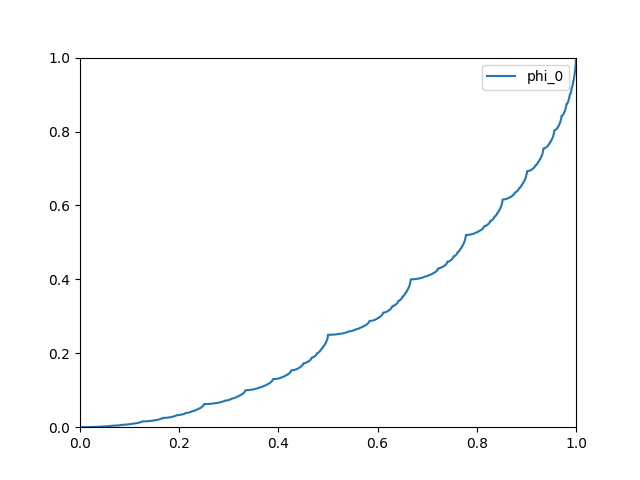}
\caption{\footnotesize Graph of $\varphi_0$}
\end{minipage}
\begin{minipage}{0.49\columnwidth}
\centering
\includegraphics[scale=0.4]{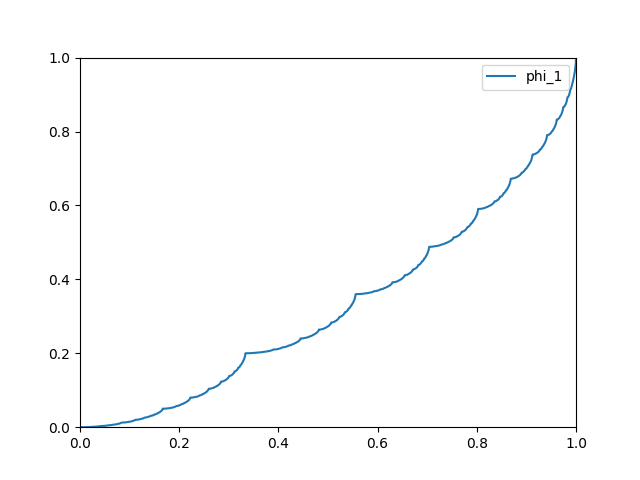}
\caption{\footnotesize Graph of $\varphi_1$}
\end{minipage}
\end{figure}
\end{Exa}

\begin{Exa}\label{exa:LF-sing}
Let $\{f_{i,j}\}_{i,j}$ be the functions as in \eqref{eq:def-f-dyadic}. 
Let 
\[ g_{0,0}(x) = \frac{x}{x+1}, \ g_{0,1}(x) = \frac{1}{2-x}, \ \ g_{1,0}(x) = \frac{x}{3-x}, \ g_{1,1}(x) = \frac{3x +1}{2x+2}. \]
By Theorem \ref{thm:LF} (i), the solutions $\varphi_0$ and $\varphi_1$ are both singular. 
The graphs of the solutions are given as follows: 
\begin{figure}[H]
\begin{minipage}{0.49\columnwidth}
\centering
\includegraphics[scale=0.4]{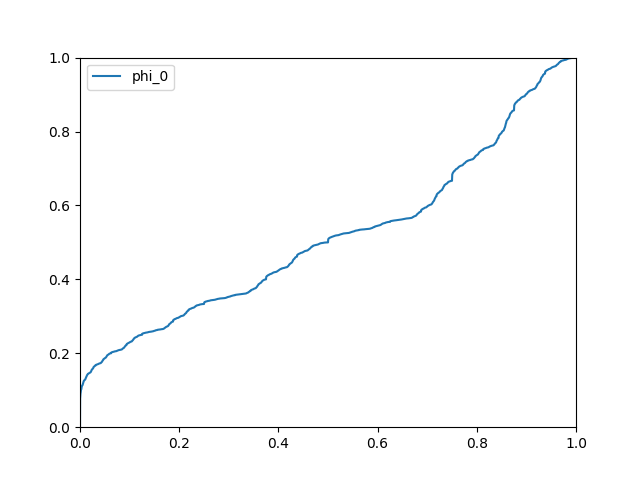}
\caption{\footnotesize Graph of $\varphi_0$}
\end{minipage}
\begin{minipage}{0.49\columnwidth}
\centering
\includegraphics[scale=0.4]{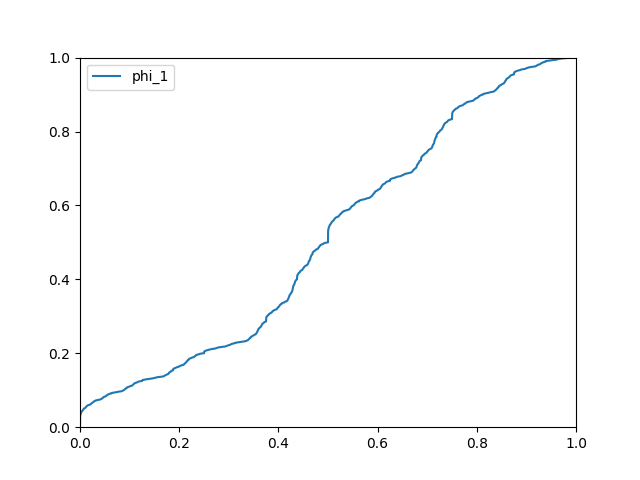}
\caption{\footnotesize Graph of $\varphi_1$}
\end{minipage}
\end{figure}
\end{Exa}

\begin{Exa}\label{exa:LF-ac}
Let $\{f_{i,j}\}_{i,j}$ be the functions as in \eqref{eq:def-f-dyadic}. 
Assume that $\Phi \left(A_{i,j}^{\top}; \alpha_i \right) = \alpha_j$ holds for every $i, j \in \{0,1\}$. 
Let $c_{0,0} = -1/2$ and $c_{1,1} = 1/2$. 
Then 
\[ g_{0,0}(x) = \frac{2x}{4-x}, \ g_{0,1}(x) = \frac{4x+2}{3x+3}, \ \ g_{1,0}(x) = \frac{2x}{12- 7x}, \ g_{1,1}(x) = \frac{4x +2}{x+5}. \]
By Theorem \ref{thm:LF} (ii), the solutions $\varphi_0$ and $\varphi_1$ are given by 
\[ \varphi_0 (x) = \frac{2x}{x+1}, \ \varphi_1 (x) = \frac{2x}{3-x}. \] 
The graphs of the solutions are given as follows: 
\begin{figure}[H]
\begin{minipage}{0.49\columnwidth}
\centering
\includegraphics[scale=0.4]{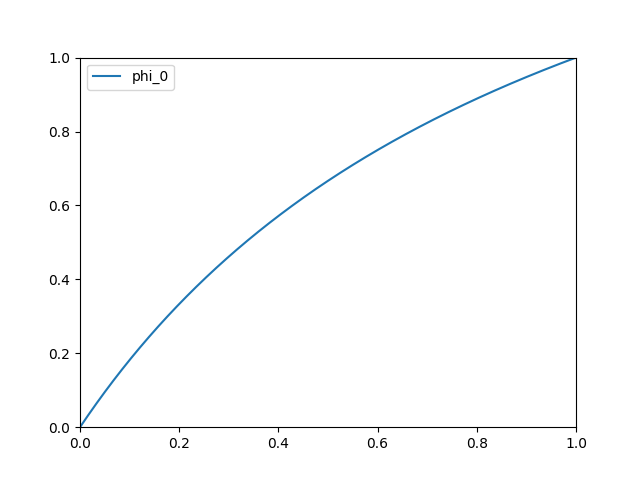}
\caption{\footnotesize Graph of $\varphi_0$}
\end{minipage}
\begin{minipage}{0.49\columnwidth}
\centering
\includegraphics[scale=0.4]{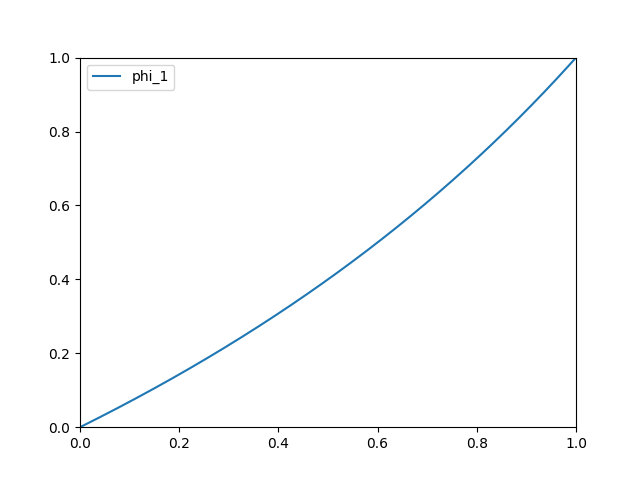}
\caption{{\footnotesize Graph of $\varphi_1$}}
\end{minipage}
\end{figure}
\end{Exa}

\begin{Exa}\label{exa:nl}
Let $\{f_{i,j}\}_{i,j}$ be the functions as in \eqref{eq:def-f-dyadic}. 
Let  
\[ g_{0,0}(x) = \frac{x^2}{x+1}, \ g_{0,1}(x) = \frac{x+1}{2}, \ \ g_{1,0}(x) = \frac{x^{3/2}}{8}, \ g_{1,1}(x) = \frac{7x +1}{8}. \]
In this case, it holds that $\prod_{i,j \in \{0,1\}} \| g_{i,j} \|_{\textup{Lip}}  = \frac{63}{1024} < \frac{1}{16}$. 
By Theorem \ref{thm:nl}, the solutions $\varphi_0$ and $\varphi_1$ are singular. 
The graphs of the solutions are given as follows: 
\begin{figure}[H]
\begin{minipage}{0.49\columnwidth}
\centering
\includegraphics[scale=0.4]{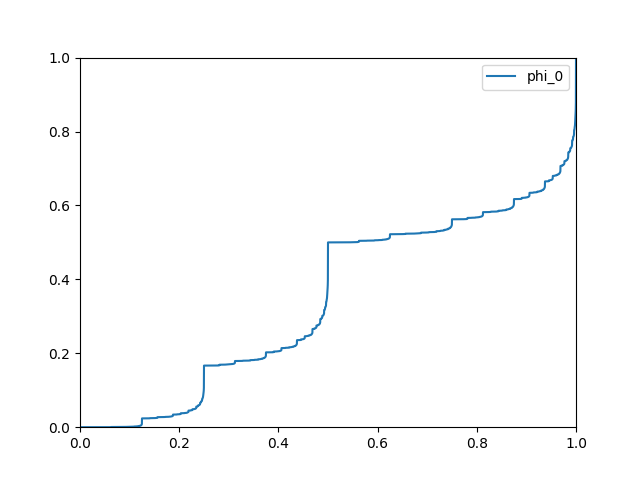}
\captionsetup{font=footnotesize}
\caption{Graph of $\varphi_0$}
\end{minipage}
\begin{minipage}{0.49\columnwidth}
\centering
\includegraphics[scale=0.4]{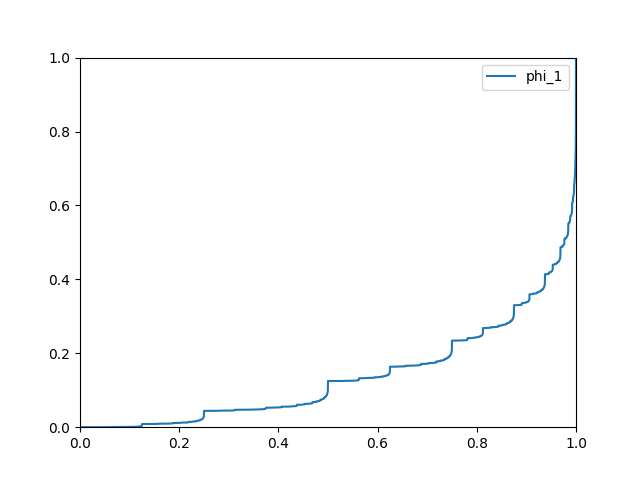}
\captionsetup{font=footnotesize}
\caption{Graph of $\varphi_1$}
\end{minipage}
\end{figure}
\end{Exa}

\cite[Example 5.2]{Okamura2025} gives further examples, but does not provide singularity results. 

\bibliographystyle{plain}
\bibliography{singu-gdR}

\begin{thebibliography}{10}

\bibitem{Barany2018}
Bal{\'a}zs B{\'a}r{\'a}ny, Gergely Kiss, and Istv{\'a}n Kolossv{\'a}ry.
\newblock Pointwise regularity of parameterized affine zipper fractal curves.
\newblock {\em Nonlinearity}, 31(5):1705--1733, 2018.

\bibitem{Berg2000}
L.~Berg and M.~Kr\"uppel.
\newblock De {R}ham's singular function and related functions.
\newblock {\em Z. Anal. Anwendungen}, 19(1):227--237, 2000.

\bibitem{BuescuSerpa2021}
Jorge Buescu and Cristina Serpa.
\newblock Compatibility conditions for systems of iterative functional
  equations with non-trivial contact sets.
\newblock {\em Results Math.}, 76(2):Paper No. 68, 19, 2021.

\bibitem{Girgensohn2006}
Roland Girgensohn, Hans-Heinrich Kairies, and Weinian Zhang.
\newblock Regular and irregular solutions of a system of functional equations.
\newblock {\em Aequationes Math.}, 72(1-2):27--40, 2006.

\bibitem{Hata1985K}
Masayoshi Hata.
\newblock On the functional equation {{\(1/p\cdot \{f(x/p)+\cdots +f((x+p-
  1)/p)\}=\lambda f(\mu x)\)}}.
\newblock {\em J. Math. Kyoto Univ.}, 25:357--364, 1985.

\bibitem{Hata1985}
Masayoshi Hata.
\newblock On the structure of self-similar sets.
\newblock {\em Japan J. Appl. Math.}, 2:381--414, 1985.

\bibitem{Kawaharada2025}
Akane Kawaharada.
\newblock Linear symmetric cellular automata provide {Salem}'s singular
  function.
\newblock {\em Real Anal. Exch.}, 50(1):1--18, 2025.

\bibitem{Kawamura2002}
Kiko Kawamura.
\newblock On the classification of self-similar sets determined by two
  contractions on the plane.
\newblock {\em J. Math. Kyoto Univ.}, 42(2):255--286, 2002.

\bibitem{Matkowski1975}
Janusz Matkowski.
\newblock Integrable solutions of functional equations.
\newblock {\em Diss. Math.}, 127, 1975.

\bibitem{MauldinWilliams1988}
R.~Daniel Mauldin and S.~C. Williams.
\newblock Hausdorff dimension in graph directed constructions.
\newblock {\em Trans. Am. Math. Soc.}, 309(2):811--829, 1988.

\bibitem{MuramotoSekiguchi2020}
Katsushi Muramoto and Takeshi Sekiguchi.
\newblock Directed networks and selfsimilar systems.
\newblock {\em Toyama Math. J.}, 41:1--32, 2020.

\bibitem{Okamura2024}
Kazuki Okamura.
\newblock Quantitative estimates for singularity for conjugate equations driven
  by linear fractional transformations.
\newblock Preprint, {arXiv}:2407.11565.

\bibitem{Okamura2014}
Kazuki Okamura.
\newblock Singularity results for functional equations driven by linear
  fractional transformations.
\newblock {\em J. Theor. Probab.}, 27(4):1316--1328, 2014.

\bibitem{Okamura2019}
Kazuki Okamura.
\newblock Some results for conjugate equations.
\newblock {\em Aequationes Math.}, 93(6):1051--1084, 2019.

\bibitem{Okamura2025}
Kazuki Okamura.
\newblock Construction of graph-directed invariant sets of weak contractions on
  semi-metric spaces.
\newblock {\em Aequationes Math.}, 99(6):2631--2656, 2025.

\bibitem{Rus2001}
Ioan~A. Rus.
\newblock {\em Generalized contractions and applications}.
\newblock Cluj University Press, Cluj-Napoca, 2001.

\bibitem{Serpa2015}
Cristina Serpa and Jorge Buescu.
\newblock Non-uniqueness and exotic solutions of conjugacy equations.
\newblock {\em J. Difference Equ. Appl.}, 21(12):1147--1162, 2015.

\bibitem{Serpa2015N}
Cristina Serpa and Jorge Buescu.
\newblock Piecewise expanding maps and conjugacy equations.
\newblock In {\em Nonlinear maps and their applications. Selected contributions
  from the NOMA 2013 international workshop, Zaragoza, Spain, September 3--4,
  2013}, pages 193--202. Cham: Springer, 2015.

\bibitem{Serpa2017}
Cristina Serpa and Jorge Buescu.
\newblock Constructive solutions for systems of iterative functional equations.
\newblock {\em Constr. Approx.}, 45(2):273--299, 2017.

\bibitem{Tao2011}
Terence Tao.
\newblock {\em An introduction to measure theory}, volume 126 of {\em Graduate
  Studies in Mathematics}.
\newblock American Mathematical Society, Providence, RI, 2011.

\bibitem{Zdun2001}
Marek~Cezary Zdun.
\newblock On conjugacy of some systems of functions.
\newblock {\em Aequationes Math.}, 61(3):239--254, 2001.

\end{thebibliography}

\end{document}